\newtheorem{theo}{Theorem}[section]
\newtheorem{prop}{Proposition}[section]
\theoremstyle{definition}
\newtheorem{remark}{Remark}[section]
\newtheorem*{notation}{Notation}
\newcommand{\RR}{\mathbb{R}}
\newcommand{\bu}{\mathbf{u}}
\newcommand{\pa}{\partial}
\newcommand{\ds}{\displaystyle}
\newcommand{\dive}{\text{\normalfont div\,}}
\newcommand{\curl}{\text{\normalfont curl\,}}
\definecolor{Green}{rgb}{0.010,0.7,0.02}
\newcommand{\beq}{\begin{equation}}
\newcommand{\eeq}{\end{equation}}
\newcommand{\bal}{\begin{align}}
\newcommand{\eal}{\end{align}}
\newcommand{\cX}{\mathcal{X}}
\newcommand{\bv}{\mathbf{v}}
\newcommand{\be}{\bm{e}}
\newcommand{\ep}{\varepsilon}
\newcommand{\norm}[1]{\|#1\|}
\newcommand{\dt}[1]{\frac{d #1}{dt}}
\newcommand{\real}{\mathbb{R}}
\definecolor{magenta}{rgb}{1.0, 0.0, 1.0}
\definecolor{deepmagenta}{rgb}{0.8, 0.0, 0.8}
\definecolor{electricpurple}{rgb}{0.75, 0.0, 1.0}
\definecolor{lavenderindigo}{rgb}{0.58, 0.34, 0.92}
\definecolor{deeppink}{rgb}{1.0, 0.08, 0.58}
\definecolor{vividviolet}{rgb}{0.62, 0.0, 1.0}
\definecolor{darkorchid}{rgb}{0.6, 0.2, 0.8}
\definecolor{dartmouthgreen}{rgb}{0.05, 0.5, 0.06}
\definecolor{green(ncs)}{rgb}{0.0, 0.62, 0.42}
\definecolor{lawngreen}{rgb}{0.49, 0.99, 0.0}
\definecolor{green(pigment)}{rgb}{0.0, 0.65, 0.31}
\definecolor{green(html/cssgreen)}{rgb}{0.0, 0.5, 0.0}
\definecolor{neongreen}{rgb}{0.22, 0.88, 0.08}
\definecolor{orange(colorwheel)}{rgb}{1.0, 0.5, 0.0}
\definecolor{phlox}{rgb}{0.87, 0.0, 1.0}
\definecolor{tangelo}{rgb}{0.98, 0.3, 0.0}
\definecolor{yellow}{rgb}{1.0, 1.0, 0.0}
\definecolor{cadmiumyellow}{rgb}{1.0, 0.96, 0.0}
\definecolor{laserlemon}{rgb}{1.0, 1.0, 0.13}
\definecolor{uscgold}{rgb}{1.0, 0.8, 0.0}
\definecolor{uclagold}{rgb}{1.0, 0.7, 0.0}
\definecolor{darkspringgreen}{rgb}{0.09, 0.45, 0.27}
\definecolor{darkpastelgreen}{rgb}{0.01, 0.75, 0.24}
\definecolor{brightgreen}{rgb}{0.4, 1.0, 0.0}
\title[2D Ideal Boussinesq and density-dependent Euler]{Long-time existence for the 2D ideal Boussinesq and the 2D density-dependent Euler equations}
\author[H. Bae et Al.]{Hantaek Bae}
\author[]{Milton C. Lopes Filho}
\author[]{Anna L. Mazzucato}
\author[]{Helena J. Nussenzveig Lopes}
\date{\today}
\begin{document}

\begin{abstract} We establish long-time existence of smooth solutions to the 2D ideal Boussinesq equations and to the 2D non-homogeneous incompressible Euler equations for initial data consisting of small temperature perturbations, or small density perturbations, of smooth initial flows which are not necessarily small. Both results are known, see \cite{D13,DF11}, but the technique we develop to prove them is at the same time elementary and has broad potential applicability.
\end{abstract}

\maketitle

Dedicated to Peter Constantin, on the occasion of his 75$^{th}$ birthday.

\section{Introduction} \label{s:intro}

The aim of this work is to study existence of strong solutions on an arbitrary interval of time for the two-dimensional (2D for short) ideal Boussinesq equations and the 2D inviscid density-dependent, incompressible Euler equations on the whole plane, provided the initial temperature, respectively the initial density, are sufficiently close to a constant. The 2D Boussinesq equations are a model for convection in geophysical flows, while the 2D density-dependent Euler equations are a model for the motion of multi-phase, immiscible fluids.

Global existence holds for strong solutions of the (constant density) 2D incompressible Euler equations, a consequence of 2D vorticity transport. In fact, global existence is known even at the level of weak solutions with bounded vorticity. However, for the Boussinesq equations and for the density-dependent incompressible Euler equations, no global existence is known, because the vorticity equation is no longer a transport equation and vortex stretching is possible. In both cases, we have local well-posedness of smooth solutions, see \cite{CN97,CL03,BLS20} and long-time well-posedness also follows, if the initial data, both the velocity and the density or temperature perturbation, are small enough. The objective is to prove long-time well-posedness with no smallness assumption on the initial velocity, still taking advantage of vorticity transport.

In both problems, we identify a {\it continuation norm}, which, if finite, allows the smooth solution to be extended forward in time. We then derive an inequality for the continuation norm which uses the 2D structure of the problem, where we estimate the time derivative of the continuation norm in terms of a tame nonlinear part which, taken by itself, would allow for a long-time bound on the continuation norm, plus another term, which is history-dependent and nonlinear in a bad way, but which is {\it small} in terms of the size of the perturbation parameter.
Our main contribution is to derive a Gronwall-type estimate for the resulting history-dependent inequality, valid for a time which can be made as large as we want, by taking the perturbation parameter sufficiently small. The proof is elementary and adaptable to other similar contexts.

This article is divided in three sections. In the remainder of this section we introduce the Boussinesq and the non-homogeneous incompressible Euler equations, set notation and state our main results. In Section 2 we prove the simpler Boussinesq result and in Section 3 we prove the non-homogeneous Euler result, outlining those aspects of the proof that repeat the previous proof and add some concluding remarks.

Let us first write  the 2D Boussinesq equations on $\RR^2$ with zero viscosity and zero thermal diffusivity:
\begin{equation} \label{eq:Boussinesq}
  \begin{cases}
       \pa_t \bu + \bu \cdot \nabla \bu = - \nabla p + \theta \, \be_2, \\
    \pa_t \theta + \bu\cdot \nabla \theta =0, \\
    \dive \bu =0,
  \end{cases}
\end{equation}
where $\bu$ represents the fluid velocity, $p$ the fluid pressure, $\theta$ the temperature and $\be_2=(0,1)$. We choose the direction in which the buoyancy force acts to be fixed and agreeing with the vertical direction for simplicity. Our results hold for an arbitrary direction as long as it is fixed.

Similarly, we write the 2D non-homogeneous Euler equations on $\RR^2$ as the following system:
\begin{equation} \label{eq:nhEuler}
  \begin{cases}
       \rho \left(\pa_t \bv + \bv \cdot \nabla \bv\right) = - \nabla p , \\
    \pa_t \rho + \bv\cdot \nabla \rho =0, \\
    \dive \bv =0,
  \end{cases}
\end{equation}
where $\bv$ is  the fluid velocity, $p$ the fluid pressure, and $\rho$ the density.

\begin{notation}
In multiline formulas, the subscript on the label indicates the corresponding line, e.g. equation $\text{\eqref{eq:Boussinesq}}_2$ means the second equation of the system \eqref{eq:Boussinesq}. We also use the shorthand notation for functions of space and time $f(t)(x)=f(t,x)$, $t\geq 0$, $x\in \RR^2$, and write $f\in C_t(H^3(\RR^2))$ instead of $f\in C([0,T);H^3(\RR^2))$,  whenever the dependence on the time interval is clear. The space $H^s(\mathbb{R}^2)$ is the standard $L^2$-based Sobolev space. For notational ease, we may also drop the explicit dependence on the spatial domain $\RR^2$, which is fixed throughout. Lastly, $C$ denotes a generic constant that may vary from line to line and may depend on the norm of the initial data.
\end{notation}

We observe that, formally, the ideal Boussinesq equations reduce to the standard  Euler equations if $\theta_0$, and hence $\theta$, is a constant. Similarly, the density-dependent Euler equations become the standard Euler equations if $\rho_0$, and hence $\rho$, is a constant.

Short time existence and uniqueness of strong solution to \eqref{eq:Boussinesq} is  known, see  \cite[Theorem 2.1]{CN97} and \cite{D13,MP19}.  Short-time existence and uniqueness of strong solutions to  \eqref{eq:nhEuler} has been extensively investigated both in Sobolev spaces, as well as in Besov spaces, in bounded domains and in $\RR^d$, $d=2,3$. We mention, in particular, the works \cite{FM24, W13, CL03, I94, DF11, D10, BV80.1, BV80.2}. 

We are interested in studying the long-time existence of strong solutions to  \eqref{eq:Boussinesq} and \eqref{eq:nhEuler}, when the initial temperature $\theta_0=\theta(0)$ or the initial density $\rho_0=\rho(0)$ is close to a constant value, which we can take to be equal to $1$ without loss of generality. To this end, we assume that:
\begin{align}
   &\theta_0(x) = 1 +\ep\, \phi_0(x),  \label{eq:initialTemperature} \\
   &\rho_0(x)=  1 +\ep\, \varphi_0(x), \label{eq:initialDensity}
\end{align}
where $\ep>0$ is a small parameter. Specifically, we want to investigate how the time of existence of the solution depends on the parameter $\ep$.
We assume then $\bu_0$ or $\bv_0\in H^3_\sigma(\RR^2)$, $\theta_0$ or $\rho_0 \in H^3(\RR^2)$. ($H^3$ can be replaced by $H^s$ for $s>2$, but for simplicity we take an integer-valued exponent.)  By a strong solution, we mean a weak solution such that $\bu$ or $\bv\in C([0,T);H^3_\sigma\cap C^1([0,T);H_\sigma^{2})$, $\theta$ or $\rho\in C([0,T);H^3\cap C^1([0,T);H^{2})$, and the equation is satisfied at least pointwise a.e.\,.

Our goal is to show that we can extend the time of existence to any arbitrary $0<T<\infty$, provided $\ep$ is small enough. This result will be established by an improved energy estimate, Gr\"onwall's inequality and a continuation argument.
Indeed, as for the Euler equations,  the $H^3$ norm of $(\bu,\theta)$, for the Boussinesq equations, respectively  of $(\bv,\rho)$ for the density-dependent Euler equations, controls the time of existence of the solution; this can be easily seen from the proof of existence of strong solutions or from blow-up criteria, for both systems. We refer, in particular, to \cite[Theorem 3.1]{CN97} and to \cite[Theorem 1.1]{BLS20} (see also \cite{FZ21,CL03,Z10,MP19}), where $s=3$ for simplicity.

Our main results are the following theorems.

\begin{theo} \label{t:MainBoussinesq}
 Let $\bu_0\in H^3_\sigma(\RR^2)$ and $\phi_0\in H^2(\RR^2)$. Let $T\in (0,\infty)$. There exists $0<\ep_0\leq 1$  such that, if $\ep < \ep_0$, then there exists a unique $(\bu,\theta) \in C^1([0,T];H^3(\real^2))$ which is a strong solution of
 the 2D ideal Boussinesq equations \eqref{eq:Boussinesq} with $\bu(\cdot,0)=\bu_0$ and $\theta(\cdot,0)=\theta_0\equiv 1 + \ep \phi_0$.
\end{theo}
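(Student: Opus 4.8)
The plan is to run a continuation argument: local well-posedness gives a maximal time of existence $T^*$, and the solution extends past any $T<\infty$ as long as $\|\bu(t)\|_{H^3}+\|\theta(t)\|_{H^3}$ (equivalently $\|\bu(t)\|_{H^3}+\|\phi(t)\|_{H^3}$, where $\phi=(\theta-1)/\ep$) stays finite. So it suffices to prove an a priori bound on this continuation norm on $[0,T]$, uniform for $\ep$ small. First I would exploit the 2D structure: the vorticity $\omega=\curl\bu$ satisfies $\pa_t\omega+\bu\cdot\nabla\omega=\pa_{x_1}\theta=\ep\,\pa_{x_1}\phi$, so the $H^2$ norm of $\omega$ (hence the $H^3$ norm of $\bu$) is driven only by $\ep$ times derivatives of $\phi$. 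Meanwhile $\phi$ is transported by $\bu$, so $\|\phi(t)\|_{H^3}$ grows at a rate controlled by $\|\nabla\bu(t)\|_{L^\infty}\lesssim \|\bu(t)\|_{H^3}$. Writing $X(t)=\|\bu(t)\|_{H^3}$ and $Y(t)=\|\phi(t)\|_{H^3}$, standard energy/commutator estimates (using that $H^2(\RR^2)$ and $H^3(\RR^2)$ are algebras, and Beale–Kato–Majda-type logarithmic inequalities if needed, though here $H^3$ control is enough) should yield a system roughly of the form
\begin{align}
  \dt{X} &\le C\,\ep\,Y, \label{eq:Xineq}\\
  \dt{Y} &\le C\,X\,Y. \label{eq:Yineq}
\end{align}

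The key difficulty — and the paper's advertised contribution — is that this coupled system is not closed by naive Gr\"onwall: $Y$ can grow like $\exp(C\int_0^t X)$, and if $X$ itself grew that would blow up in finite time. The resolution is the history-dependent Gr\"onwall estimate. Integrating \eqref{eq:Xineq} gives $X(t)\le X(0)+C\ep\int_0^t Y(s)\,ds$, and feeding this into \eqref{eq:Yineq} yields an integral inequality for $Y$ alone:
\begin{equation}
  \dt{Y}(t) \le C\Bigl(X(0)+C\ep\int_0^t Y(s)\,ds\Bigr)Y(t).
\end{equation}
On any fixed interval $[0,T]$, as long as the bootstrap hypothesis $\ep\int_0^t Y \le 1$ (say) holds, we get $\dt{Y}\le C(X(0)+1)Y$, hence $Y(t)\le Y(0)e^{C(X(0)+1)T}\coleq M_T$, a bound depending on $T$ and the data but \emph{not} on $\ep$. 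Then $\ep\int_0^T Y \le \ep\, M_T\, T$, which is $\le 1$ once $\ep\le \ep_0\coleq \min\{1,(M_T T)^{-1}\}$. This closes the bootstrap: a standard continuity argument (the set of $t$ where the bootstrap bound holds is nonempty, closed, and open in $[0,T]$) upgrades the conditional estimate to an unconditional one on all of $[0,T]$. Consequently $X(t)\le X(0)+C\ep M_T T \le X(0)+C$ on $[0,T]$, so the continuation norm is finite and the solution extends to $[0,T]$.

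The main obstacle I expect is not the Gr\"onwall/bootstrap step, which is clean once set up, but rather carefully justifying the \emph{a priori} differential inequalities \eqref{eq:Xineq}–\eqml{eq:Yineq}\unskip — in particular producing the crucial gain of the factor $\ep$ in $\dt X$. This requires working with the vorticity formulation (so that the forcing is exactly $\ep\pa_{x_1}\phi$ with no lower-order contamination), controlling $\|\bu\|_{H^3}$ by $\|\omega\|_{H^2}+\|\bu\|_{L^2}$ via Calder\'on–Zygmund/Biot–Savart estimates on $\RR^2$ (and an $L^2$ bound on $\bu$ itself, which follows from an energy identity for \eqref{eq:Boussinesq}), and handling the transport-structure commutators for $\phi\in H^3$ with $\|\nabla\bu\|_{L^\infty}$ estimated by $\|\omega\|_{H^2}$. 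The uniqueness statement follows by the usual energy estimate on the difference of two solutions in a lower-order norm ($L^2$ for velocity and density perturbation), since both solutions lie in the strong class. Finally, the claimed regularity $(\bu,\theta)\in C^1([0,T];H^3)$ is read off from the equations once the $H^3$ bound and the transport/elliptic structure are in hand.
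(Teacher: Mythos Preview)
Your inequality \eqref{eq:Xineq}, $\frac{dX}{dt}\le C\ep Y$, is false, and this is the crux of the matter. It is only the $L^p$ norms of $\omega$, not its $H^2$ norm, that are ``driven only by $\ep$'': the transport term $\bu\cdot\nabla\omega$ preserves $\|\omega\|_{L^p}$, but in an $H^2$ energy estimate it produces, via the Kato--Ponce commutator, a term of size $\|\nabla\bu\|_{L^\infty}\|\omega\|_{H^2}\sim \|\nabla\bu\|_{L^\infty}\, X$. In particular, for pure 2D Euler ($\ep=0$) one has $\frac{dX}{dt}\le C\|\nabla\bu\|_{L^\infty} X$, not $\frac{dX}{dt}\le 0$; your inequality would force $\|\bu(t)\|_{H^3}$ to be constant for 2D Euler, which is well known to be wrong (it can grow double-exponentially). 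So the Beale--Kato--Majda logarithmic estimate is not optional here --- it is the entire mechanism: one must replace $\|\nabla\bu\|_{L^\infty}$ by $C(1+\log^+X)\|\omega\|_{L^\infty}$ and then exploit that $\|\omega\|_{L^\infty}$, unlike $\|\omega\|_{H^2}$, is perturbed from its initial value only by $C\ep\int_0^t\|\nabla\phi\|_{L^\infty}$.

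The corrected inequality is therefore of the form
\[
\frac{dX}{dt}\le C X\,(1+\log^+X)\Bigl[1+\ep\int_0^t\|\nabla\phi(s)\|_{L^\infty}\,ds\Bigr]+C\ep Y,
\]
combined with $\|\nabla\phi(s)\|_{L^\infty}\le\|\nabla\phi_0\|_{L^\infty}\exp\bigl(\int_0^s\|\nabla\bu\|_{L^\infty}\bigr)$. Your bootstrap idea can be salvaged in spirit, but the structure is now history-dependent and nonlinear in a much worse way: even under the bootstrap hypothesis, $X$ grows double-exponentially on $[0,T]$, $Y$ then grows triple-exponentially, and closing requires $\ep_0\sim e^{-e^{e^{CT}}}$ rather than the polynomial $\ep_0\sim(M_TT)^{-1}$ you obtain. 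The paper handles this by packaging everything into a single integro-differential inequality for the continuation norm $y=\|\bu\|_{H^3}+\ep\|\phi\|_{H^3}$, passing to the associated integro-differential \emph{equation}, solving the latter on $[0,T]$ for small $\ep$ via contraction mapping (after the change of variables $w=e^{-Ct}(1+\log z)$ that tames the Osgood term $z(1+\log z)$), and then using a comparison argument.
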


\begin{remark} \label{r:MainBoussinesq}
    We note that $\ep_0$ depends only on $T$ and $\widetilde{M}:= \norm{\bu_0}_{H^3} + \norm{\phi_0}_{H^3}$. Moreover, we will see from the proof that, given $\ep>0$, the time of existence $T$ has a lower bound of $C\log \log \log (1/\ep)$.
\end{remark}

\begin{theo} \label{t:MainNhEuler}
 Let $\bv_0\in H^3_\sigma(\RR^2)$ and $\varphi_0\in H^3(\RR^2)$, with $\varphi_0 \geq 0$. Let $T\in [0,\infty)$. There exists $0<\ep_0\leq 1$ such that, if $\ep < \ep_0$, then there exists a unique $(\bv,p,\rho) \in C^1([0,T];H^3(\real^2))$  which is a strong solution of the 2D density-dependent Euler equations \eqref{eq:nhEuler}   with initial data $\bv_0$, $\rho_0 = 1 + \ep \varphi_0$.
\end{theo}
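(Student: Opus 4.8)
The plan is to mirror the structure I expect was used for Theorem~\ref{t:MainBoussinesq}, adapting the continuation-norm machinery to the density-dependent case. First I would fix $T\in(0,\infty)$ and, using the known local existence theory (e.g.\ \cite{CL03,BLS20}), take the maximal strong solution $(\bv,p,\rho)$ on $[0,T^\ast)$ with $T^\ast\le T$; if $T^\ast=T$ we are done, so assume $T^\ast<T$ and aim for a contradiction by controlling the continuation norm uniformly on $[0,T^\ast)$. The transport equation $\pa_t\rho+\bv\cdot\nabla\rho=0$ with $\dive\bv=0$ immediately gives $\|\rho(t)-1\|_{L^\infty}=\|\rho_0-1\|_{L^\infty}=\ep\|\varphi_0\|_{L^\infty}\lesssim\ep$ and preserves $\rho\ge 1$ (this is why $\varphi_0\ge 0$ is assumed, so that $1/\rho$ stays bounded uniformly), as well as preserving all $H^s$ norms of $\varphi:=(\rho-1)/\ep$ along the flow up to factors depending on $\nabla\bv$. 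The continuation norm should again be the vorticity-type quantity $\|\curl\bv\|_{L^\infty}$ (or $\|\nabla\bv\|_{L^\infty}$), since a Beale--Kato--Majda-type criterion for \eqref{eq:nhEuler} says the $H^3$ norm stays finite as long as $\int_0^t\|\curl\bv\|_{L^\infty}\,ds<\infty$.

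Next I would derive the vorticity equation. Writing $\omega=\curl\bv$ and dividing $\text{\eqref{eq:nhEuler}}_1$ by $\rho$ before taking the curl, one gets $\pa_t\omega+\bv\cdot\nabla\omega = \curl\big((1/\rho)\nabla p\big) = \nabla(1/\rho)\times\nabla p$, so the vortex-stretching term is $R:=\nabla(1/\rho)\wedge\nabla p$, which is $O(\ep)$ in size because $\nabla(1/\rho)=-\rho^{-2}\ep\nabla\varphi$. Along particle trajectories this yields $\frac{d}{dt}\|\omega(t)\|_{L^\infty}\le \ep\,C(t)$, where $C(t)$ involves $\|\nabla\varphi(t)\|_{L^\infty}\|\nabla p(t)\|_{L^\infty}$. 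The pressure is recovered from the elliptic equation $\dive\big((1/\rho)\nabla p\big)=-\dive(\bv\cdot\nabla\bv)$, i.e.\ $-\Delta p = \rho\,\dive(\bv\cdot\nabla\bv) + \nabla(1/\rho)\cdot\rho\nabla p$ (equivalently a perturbed Laplace problem), so $\|\nabla p\|$ in the relevant norm is controlled by $\|\nabla\bv\|^2$ plus an $\ep$-correction, and crucially $\|\nabla^2 p\|_{L^\infty}$ (needed for $\|\nabla\varphi(t)\|_{L^\infty}$ estimates via the transport of $\varphi$, whose growth is driven by $\nabla\bv$) closes in terms of the higher Sobolev norms. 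This is where the history dependence enters: $\|\nabla\varphi(t)\|_{L^\infty}$ grows like $\exp\big(\int_0^t\|\nabla\bv\|_{L^\infty}\big)$, and $\|\nabla\bv\|_{L^\infty}$ in turn is controlled (log-type, Beale--Kato--Majda) by $\|\omega\|_{L^\infty}$ and $\|\omega\|_{L^p}$-type quantities plus the growing higher norms. Assembling these gives an inequality of the schematic form
\begin{equation}\label{eq:plan-ineq}
\dt{y(t)} \le \Phi(y(t)) + \ep\,\Psi\Big(\,y(t),\ \exp\big(\textstyle\int_0^t y(s)\,ds\big)\Big),
\end{equation}
where $y$ is the continuation norm, $\Phi$ is tame (sub-exponential, so globally solvable by itself), and the second term carries the bad history-dependent nonlinearity but with an explicit $\ep$ prefactor.

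The main obstacle — and the paper's advertised main contribution — is closing the Gr\"onwall-type argument for \eqref{eq:plan-ineq} on an interval of length $T$ as large as we like, by choosing $\ep$ small. The idea is a bootstrap/continuity argument: suppose $y(t)\le 2\bar y(t)$ on some maximal subinterval, where $\bar y$ solves the tame problem $\dot{\bar y}=\Phi(\bar y)$, $\bar y(0)=y(0)$; then on that subinterval $\int_0^t y\le 2\int_0^T\bar y =: K(T)$, so the bad term is bounded by $\ep\,\Psi(2\bar y(t),e^{K(T)})$, which can be absorbed into the tame estimate provided $\ep\le \ep_0(T)$ is chosen after fixing $T$ (hence after fixing $K(T)$). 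A continuity argument then shows the subinterval is in fact all of $[0,T^\ast)$ and $y$ stays bounded, contradicting maximality of $T^\ast<T$ and extending the solution to $[0,T]$. I would expect the density case to be slightly more delicate than the Boussinesq case because the coupling between $\rho$ and $\bv$ appears through the variable-coefficient elliptic pressure equation rather than through a simple forcing term $\theta\be_2$, so one must be careful that the elliptic estimates for $p$ in $H^3$ (and for $\nabla^2 p$ in $L^\infty$, using that $1/\rho$ is a small $H^3$-perturbation of the constant $1$) do not themselves introduce uncontrolled growth; this is manageable because $\|(1/\rho)-1\|_{H^3}=O(\ep)$ uniformly in time. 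Uniqueness on $[0,T]$ follows from the standard local uniqueness theory applied on overlapping subintervals, and the $C^1([0,T];H^3)$ regularity from the equations once the $H^3$ bound is in hand. Finally, tracking the dependence of $\ep_0$ on $T$ through the triple-exponential structure of \eqref{eq:plan-ineq} (transport gives one exponential, Beale--Kato--Majda a second, Gr\"onwall a third) recovers a lower bound on the existence time of the form $C\log\log\log(1/\ep)$ as in Remark~\ref{r:MainBoussinesq}.
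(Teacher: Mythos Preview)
Your overall strategy is correct and closely mirrors the paper's, with one factual inaccuracy and one genuine methodological difference worth noting.

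The inaccuracy: you assert $\|(1/\rho)-1\|_{H^3}=O(\ep)$ \emph{uniformly in time}. This is false. Writing $\eta=1/\rho-1$, one has $\|\eta(t)\|_{H^3}\le C\ep\,\|\varphi(t)\|_{H^3}^3$ (the cube coming from the chain rule applied to $1/(1+\ep\varphi)$), and $\|\varphi(t)\|_{H^3}$ grows like $\exp\big(C\int_0^t\|\bv\|_{H^3}\big)$ by the transport estimate. Thus $\|\eta\|_{H^3}$ retains the $\ep$ factor but is \emph{not} uniformly bounded; its growth must be fed into the history-dependent nonlinearity $\Psi$, not dismissed. This matters because the elliptic estimate for $\|\nabla p\|_{H^2}$ picks up a term of order $\|\nabla\bv\|_{L^\infty}\|\eta\|_{H^3}^2$, which contributes high powers of $\|\varphi\|_{H^3}$ (up to the ninth power appears in the paper's final inequality for $Y=\|\bv\|_{H^3}$).

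The methodological difference is in closing the Gr\"onwall step. You propose a bootstrap against the solution $\bar y$ of the \emph{tame} ODE $\dot{\bar y}=\Phi(\bar y)$, then invoke smallness of $\ep$ to absorb the bad term. The paper instead compares against the solution $\mathcal Z$ of the \emph{full} history-dependent equation: after substituting $X=1+\log\mathcal Z$ and then $W=e^{-Ct}X$ (which converts the Osgood nonlinearity $\mathcal Z(1+\log\mathcal Z)$ into a linear term and removes it via the integrating factor), the equation becomes $\dot W=\ep\,\mathcal F(t,W)$, and existence of $W$ on $[0,T]$ for small $\ep$ is obtained by Banach contraction in $C([0,T])$; a monotonicity-based comparison then gives $\Upsilon\le\mathcal Z$. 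Your bootstrap is more direct and arguably more elementary; the paper's route makes the triple-exponential dependence $\ep_0\sim e^{-e^{e^{CT}}}$ explicit. Either approach works, but your phrase ``absorbed into the tame estimate'' needs to be spelled out: since $\Phi(y)=Cy(1+\log y)$ is superlinear, $\Phi(2\bar y)>2\Phi(\bar y)$, so the factor-of-2 margin does not self-propagate, and one must explicitly invoke continuous dependence of the perturbed ODE $\dot z=\Phi(z)+\ep M(T)$ on $\ep$ to keep $z\le 2\bar y$ on $[0,T]$.

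A minor bookkeeping difference: the paper takes $Y=\|\bv\|_{H^3}$ (equivalently $\Upsilon=1+Y$) as the continuation quantity, after reducing the a priori continuation norm $\|\varphi\|_{H^3}+\|\eta\|_{H^3}+\|\bv\|_{H^3}$ to it, rather than $\|\upomega\|_{L^\infty}$ as you suggest.
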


\noindent We fix $T\in (0,\infty)$ throughout the rest of the paper.

\begin{remark} \label{r:DanchinProof}
A version of the first result was proved in \cite{D13}, and the second result is mentioned in \cite{DF11}, with a sketch of a proof. In both cases, the analysis is performed in  Besov spaces, and the estimates obtained for the time of existence are sharper. As a matter of fact, taking advantage of the secondary index in the definition of Besov spaces, it is possible to obtain a direct control of the $L^\infty$ norm of the vorticity, bypassing the need for logarithmic estimates, therefore gaining one less (negative) exponential dependence of $\epsilon$ on the desired time of existence of the solution, than in our results. However, the proof relies on techniques from Littlewood-Paley theory, while we employ more classical potential estimates that may be familiar to a wider audience.
\end{remark}

\subsection*{Acknowledgments} The authors thank Raph\"ael Danchin and Francesco Fanelli for useful discussion and for pointing out relevant literature. H. Bae was supported by the National Research Foundation of Korea (NRF) grant funded by the Korea government (MSIT) (grant No. RS-2024-00341870). M. C. Lopes Filho was partially supported by CNPq, through grant \# 304990/2022-1, and by FAPERJ, through  grant \# E-26/201.209/2021. A. Mazzucato was partially suppported by the US National Science Foundation grants DMS-1909103, DMS-2206453, DMS-2511023 and by Simons Foundation under Grant 1036502. H. J. Nussenzveig Lopes acknowledges the support of CNPq, through  grant \# 305309/2022-6, and that of FAPERJ, through  grant \# E-26/201.027/2022.

\section{The 2D ideal Boussinesq equations} \label{s:Boussinesq}

This section is devoted to the proof of Theorem \ref{t:MainBoussinesq}. We begin by recalling \cite[Theorem 2.1]{CN97}, where it was proved that there exists a time $T^\ast>0$ such that, given $\bu_0$, $\theta_0  \in H^3(\real^2)$, there exists a unique strong solution to \eqref{eq:Boussinesq}, $(\bu, \theta) \in C^0([0,T^\ast);H^3(\real^2))$,  with initial data $(\bu_0,\theta_0)$. Furthermore, it can be easily verified from the proof that, in fact, $(\bu, \theta) \in C^1([0,T^\ast);H^2(\real^2))$. It follows from the construction of the local-in-time solution $(\bu, \theta)$  that, as long as $\|\bu\|_{H^3} + \|\theta\|_{H^3}$ remains bounded, the solution may be prolonged further in time.

Fix $\bu_0$, $\phi_0 \in H^3(\real^2)$. Let $\ep > 0$ and take $\theta_0=1+\ep \phi_0$. Despite the fact that $\theta_0 \notin H^3(\real^2)$ it is still possible to find a strong solution to \eqref{eq:Boussinesq} with $\theta(\cdot,0)=\theta_0$. To this end we let $(\bu^\ep,\psi^\ep) \in C^0([0,T^\ast);H^3(\real^2)) \cap C^1([0,T^\ast);H^2(\real^2))$ be the solution to \eqref{eq:Boussinesq} obtained in \cite[Theorem 2.1]{CN97}, with initial data $(\bu_0,\ep\phi_0)$, on a time interval $[0,T^\ast]$. Note that, if
\beq \label{eq:Temperature}
  \theta^\ep (t,x) = 1+ \psi^\ep (t,x),
\eeq
then $(\bu^\ep,\theta^\ep)$ is a solution of \eqref{eq:Boussinesq} with initial data $(\bu_0,1+\ep \phi_0)$. Indeed, if $q^\ep=q^\ep(t,x)$ is the pressure for the solution $(\bu^\ep,\psi^\ep)$ then $(\bu^\ep,\theta^\ep)$ satisfies \eqref{eq:Boussinesq} with pressure $p^\ep(t,x) = q^\ep(t,x) +  x_2$.

Let us write $\psi^\ep \equiv \ep \phi^\ep$ and note that $\phi^\ep$ satisfies the same transport equation as $\psi^\ep$.

We now introduce
\begin{equation} \label{eq:contnormBouss}
    y^\ep(t):=  \norm{\bu^\ep(t,\cdot)}_{H^3} +\ep\, \norm{\phi^\ep(t,\cdot)}_{H^3} > 0.
\end{equation}
We refer to $y^\ep$ as the {\em continuation norm} for \eqref{eq:Boussinesq}. Our objective is to show that, for sufficiently small $\ep$, the continuation norm remains bounded on $[0,T]$ and, thus, ($\bu^\ep$, $\phi^\ep$) can be prolonged for additional time. For convenience we drop the superscript $\ep$ in $\bu^\ep$, $\phi^\ep$ and $y^\ep$, keeping $\ep$ only as a factor of $\|\phi^\ep\|_{H^3}$.

Recall that, for $x>0$,  $\log^+ x =\max\{\log x, 0\}$. Additionally, let $\omega=\curl \bu \equiv \nabla^\perp \cdot \bu$ denote the scalar vorticity of the flow $\bu$.

Our point of departure are the following estimates for strong solutions to \eqref{eq:Boussinesq}:
\begin{align}
    &\frac{dy}{dt}
    \leq C \left(1+\norm{D \bu}_{L^\infty} +\ep\, \norm{\nabla\phi}_{L^\infty}\right)
    y, \label{eq:BoussinesqEnergyEst.1} \\ \nonumber \\
    & \norm{D \bu}_{L^\infty} \leq C\, \left[  1+ \left(1+\log^+ \norm{\bu}_{H^3} \right) \norm{\omega}_{L^\infty}+\norm{\omega}_{L^2}\right], \label{eq:BoussinesqEnergyEst.2} \\  \nonumber \\
    & \norm{\nabla \phi}_{L^p} \leq \norm{\nabla\phi_0}_{L^p} \, \exp\left( \int_0^t \norm{D \bu}_{L^\infty}\right), \label{eq:BoussinesqEnergyEst.4} \\ \nonumber \\
     & \norm{\omega}_{L^p} \leq \norm{\omega_0}_{L^p} + C \ep\, \int_0^t
    \norm{\nabla\phi(\tau)}_{L^p}\, d\tau, \label{eq:BoussinesqEnergyEst.3}
\end{align}
valid for any $p\in [1,\infty]$ and for $t\in [0,T]$. Estimate \eqref{eq:BoussinesqEnergyEst.1} is established in a standard way, by performing the same energy estimates used to derive \cite[(2.4)]{CN97}, which relied on calculus inequalities in Sobolev spaces, see for instance \cite[Lemma 3.4]{MajdaBertozzi}. Estimate \eqref{eq:BoussinesqEnergyEst.2} is a potential theory estimate, derived in two space dimensions in \cite{Kato1986} and then in three space dimensions in \cite{BKM1984}, see also \cite[Proposition 3.8]{MajdaBertozzi}. Estimate \eqref{eq:BoussinesqEnergyEst.4} is a consequence of the equation satisfied by $\nabla \phi$, namely,
\[\partial_t \nabla \phi + (u \cdot \nabla) \nabla \phi = - D \bu \nabla \phi,\]
where $D\mathbf{u}$ is the Jacobian matrix for $\mathbf{u}$.
Finally, \eqref{eq:BoussinesqEnergyEst.3} follows using energy methods on the vorticity equation:
\[\partial_t \omega + u \cdot \nabla \omega = \ep \nabla^{\perp}\phi \cdot \be_2.\]

We use \eqref{eq:BoussinesqEnergyEst.4} and \eqref{eq:BoussinesqEnergyEst.3} in \eqref{eq:BoussinesqEnergyEst.2} to find:
\begin{align} \label{eq:NablaBuEst}
     \norm{D \bu}_{L^\infty}  & \leq  C\, \Bigg\{  1+(1+\log^+ y) \left[1+C\ep \, \int^t_0 \exp\left(\int_0^s y \, d\tau\right) \, ds \right] \\
    \nonumber & + C\ep \, \int^t_0 \exp\left(\int_0^s y \, d\tau\right) \, ds \Bigg\},
\end{align}
where,  in \eqref{eq:BoussinesqEnergyEst.4}, we first estimated $\|D \bu\|_{L^\infty}$ by $\|u\|_{H^3}$, and subsequently by $y$.

Substituting the inequality \eqref{eq:NablaBuEst} into \eqref{eq:BoussinesqEnergyEst.1} gives an integro-differential inequality for $y$:
\beq \label{eq:yIneqBoussinesq}
   \frac{d y}{d t} \leq  C\, y \,\left[ (1+\log^+ y) + \ep \,(1+\log^+ y)\,\int^t_0 \exp \left(\int_0^s y \, d\tau\right)\, ds + \ep \, \exp \left(\int_0^t y \, d\tau\right)\right].
\eeq

To proceed with our analysis of \eqref{eq:yIneqBoussinesq} we first consider the integro-differential equation below:

\beq \label{eq:yODEBoussinesq}
   \frac{d z}{d t} = C\, z \,\left[  (1+\log z) + \ep \,(1+\log z)\,\int^t_0 \exp \left(\int_0^s z \, d\tau \right)\, ds + \ep \, \exp \left(\int_0^t z \, d\tau \right) \right],
\eeq
supplemented by an initial condition $z(0)=z_0>1$.

We note that the nonlinearity on the right-hand-side of \eqref{eq:yODEBoussinesq} is the same as that on the right-hand-side of \eqref{eq:yIneqBoussinesq}, substituting $\log^+$ by $\log$.

Dividing both sides of \eqref{eq:yODEBoussinesq} by $z$, making the change of variables \ $x:= 1+ \log z$, and consolidating all constant terms, yields:
\beq \label{eq:yODEBoussinesqFinal}
    \dt{x} =  C\,\left[ x + \ep \, x \int^t_0 \exp\left( b\int_0^s e^x \, d\tau \right)\, ds
    + \ep\, \exp\left( b\int_0^t e^x \, d\tau \right) \right],
\eeq
with $b=e^{-1}$ and initial data $x_0=1+ \log z_0 $.
We aim at showing that, for sufficiently small $\ep$, the solution $x(t)$ exists for $t\in [0,T)$.

 We note that the right-hand side of  \eqref{eq:yODEBoussinesqFinal} is a non-linear perturbation of a linear term. To emphasize this point, we
use an integrating factor and we introduce $\ds w(t):=e^{-C\,t}\,x(t)$; we write this equation in compact form as:
\beq \label{eq:yODEBoussinesqReduced}
  \dt{w} = \ep \, F(t,w),
\eeq
with $F$ given by
\begin{align}
    &F= F(t,w) :=  C\left\{  w(t) \int^t_0 \exp\left[b\int_0^s \exp\left(w(\tau)\,e^{C\, \tau} \right) \, d\tau\right]\, ds \right. \nonumber\\
    & \qquad \qquad \left.+ e^{-C\,t} \exp\left[b\int_0^t \exp\left(w(\tau)\,e^{C\, \tau} \right) \, d\tau\right] \right\}. \nonumber
\end{align}
Note that the initial data $w(0)=w_0$ satisfies $w_0>1$. In addition, we emphasize that \eqref{eq:yODEBoussinesqReduced} is not an ordinary differential equation since the nonlinearity $F$ depends on the entire history of $w$ up until time $t$.

Equivalently, in mild form, we may write \eqref{eq:yODEBoussinesqReduced} as:
\beq \label{eq:yODEBoussinesqMild}
   w(t)= w_0 + \ep \, \int^t_0 F(\tau,w)\, d\tau.
\eeq

Let $\cX:=C([0,T))$. We introduce a nonlinear map $\Psi_{w_0}: \cX \to \cX$ defined by $w \mapsto \Psi_{w_0}[w]$, where $\Psi_{w_0}[w]$ is precisely the right-hand-side of \eqref{eq:yODEBoussinesqMild}.

 We will solve \eqref{eq:yODEBoussinesqMild} via Banach Contraction Mapping Theorem in the space $\cX$ with the uniform norm (see e.g. \cite[Theorem 3.1]{MajdaBertozzi}).

To apply the Banach Contraction Mapping Theorem, we need to establish that, for $\ep$ sufficiently small, the following hold true:
\begin{enumerate}[label={\bf CP \arabic*.}, ref=\textcolor{black}{\bf CP \arabic*}]
 \item \label{i:CP1} There exists a ball $B(0,M)\subset \cX$ such that $\Psi_{w_0}: B(0,M)\to B(0,M)$ is a continuous map.

 \item \label{i:CP2} $\Psi_{w_0}$ is a contraction on $B(0,M)$.
\end{enumerate}

\begin{proof}[Proof of \ref{i:CP1}]
Set $\widetilde{M}= w_0 $ and choose $M=2\widetilde{M}$. It is immediate from \eqref{eq:yODEBoussinesqMild}  that $\Psi_{w_0}[w]\in \cX$. Furthermore, if $w\in B(0,M)$, then $\Psi_{w_0}[w] \in B(0, M)$   as long as the following   inequality is satisfied:
\[
  \widetilde{M} + C\,T\,\ep (1+ 2 T \widetilde{M})\, \exp\left(T\,\exp(2\widetilde{M}\,e^{C\,T})\right) \leq 2\widetilde{M},
\]
or equivalently,
\beq  \label{eq:McondBoussinesq}
   \ep \leq \frac{M}{2C \,T\, (1+ T M)\, \exp\left(T\,\exp(M\,e^{C\, T})\right)}\equiv \ep_1.
\eeq
 Continuity of $w \mapsto \Psi_{w_0}[w]$ is immediate.
\end{proof}

\begin{proof}[Proof of \ref{i:CP2}]
We begin by showing that $F$ is locally Lipschitz in $w$.
For notational convenience, we set:
\[
    g(t,w):= C \,\exp\left[b\int_0^t \exp\left( w(\tau)\,e^{C\tau}\right)\,d\tau\right].
\]
Let $w,v\in B(0,M)$. Then
\begin{align} \label{FBoussLipest}
    &F(t,w)-F(t,v) = w(t) \int^t_0 g(s,w)\, ds -v(t) \int^t_0 g(s,v)\, ds + \frac{1}{C}e^{-Ct}[g(t,w)- g(t,v)]
     \nonumber \\
     & = \big(w(t)-v(t)\big) \int^t_0 g(s,w)\, ds + v(t)  \int^t_0 \left[g(s,w) -g(s,v)\right]\, ds + \frac{1}{C}e^{-Ct}[g(t,w)- g(t,v)].
\end{align}

We will estimate the difference $\norm{F(t,w)-F(t,v)}_{L^\infty}$ in terms of $\norm{w-v}_{\infty}$.
First, we note that
\begin{equation} \label{gest1}
0< g(t,w) \leq C\exp[bT\exp(Me^{CT})].
\end{equation}

Next, we must estimate the difference $g(t,w)-g(t,v)$. We introduce
\[\xi(t)=\int_0^t \exp[w(\tau)e^{C\tau}]\,d\tau \text{ and } \eta(t)=\int_0^t \exp[v(\tau)e^{C\tau}]\,d\tau.\]
With this notation we find:
\begin{align*}
|g(t,w)-g(t,v)| & \leq C b|\xi(t)-\eta(t)|\, \sup_{0\leq s \leq 1} \, \exp[b(s \xi(t) + (1-s)\eta(t))] \\
&\leq Cb \, \exp[bT\exp(Me^{CT})]\,|\xi(t)-\eta(t)|.
\end{align*}
We now estimate $|\xi(t)-\eta(t)|$:
\begin{align*}
    |\xi(t)-\eta(t)| & =\left| \int_0^t \left[ \exp(w(\tau)e^{C\tau}) - \exp(v(\tau)e^{C\tau}) \right] \,d\tau \right| \\
& = \left| \int_0^t \int_0^1 \frac{d}{ds} \exp[(s w(\tau) + (1-s) v(\tau))e^{C\tau}]\, ds \,d\tau \right|\\
& = \left| \int_0^t \int_0^1 \exp[(s w(\tau) + (1-s) v(\tau))e^{C\tau}] \, e^{C\tau} \, (w(\tau) - v(\tau)) \, ds \,d\tau \right|\\
& \leq Te^{CT} \exp(Me^{CT})\,\norm{w-v}_{L^\infty}.
\end{align*}
Hence,
\begin{equation} \label{gest2}
  |g(t,w)-g(t,v)| \leq Cb \, \exp[bT\exp(Me^{CT})] \,Te^{CT} \, \exp(Me^{CT})\,\norm{w-v}_{L^\infty}.
\end{equation}
We insert \eqref{gest1}, \eqref{gest2} into \eqref{FBoussLipest} to obtain
\begin{equation} \label{eq:LipcondBoussinesq}
  \norm{F(t,w)-F(t,v)}_{\infty} \leq  L \norm{w-v}_{\infty},
\end{equation}
with
\[L= \exp[bT\exp(Me^{CT})]\, [CT+CbMT^2e^{CT}\exp(Me^{CT})\,+\,bTe^{CT}\exp(Me^{CT})].\]

We use \eqref{eq:LipcondBoussinesq} to show that, for $\varepsilon$ sufficiently small, $\Psi_{w_0}$ is indeed a contraction on $B(0,M)$:
\[\left|\Psi_{w_0}[w]-\Psi_{w_0}[v]\right| =\varepsilon \left|\int_0^T (F(\tau,w)-F(\tau,v))\,d\tau \right| \leq \varepsilon TL\norm{w-v}_{L^\infty}. \]
Therefore $\Psi_{w_0}$ is a contraction on $B(0,M)$ as long as
\[\varepsilon < \frac{1}{TL},\]
that is,
\beq \label{eq:ContractionCondBoussinesq}
 \ep < \{T\exp[bT\exp(Me^{CT})]\, [CT+bTe^{CT}\exp(Me^{CT})(CMT +1)]\}^{-1} \equiv \ep_2.
\eeq

\end{proof}

To conclude the existence and uniqueness of the solution $w(t)$ of \eqref{eq:yODEBoussinesqReduced}, we choose
\beq \label{eq:ep0}
\ep_0 = \min\{1,\ep_1,\ep_2\},
\eeq
where $\ep_1$ was defined in \eqref{eq:McondBoussinesq} and $\ep_2$ was given in \eqref{eq:ContractionCondBoussinesq}.
Hence, writing $x(t) = e^{Ct}w(t)$ and $z(t) = e^{x(t)-1}$ we obtain, under the smallness condition $\varepsilon < \ep_0$, the existence and uniqueness of a solution to \eqref{eq:yODEBoussinesq}
on the time interval $[0,T)$. In summary, we have established the following result.

\begin{prop} \label{prop:BoussinesqODEexist}
Fix $T>0$ and $z_0 >1$. Then there exists $0<\varepsilon_0 \leq 1$ such that, for all $0<\varepsilon < \varepsilon_0$, there exists one and only one solution $z^\ep=z^\ep(t) \in C^1([0,T))$ to \eqref{eq:yODEBoussinesq} such that $z^\ep(0)=z_0$. Moreover, $\ep_0 = \ep_0(T,z_0)$.
\end{prop}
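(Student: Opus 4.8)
The plan is to establish Proposition \ref{prop:BoussinesqODEexist} by the route already laid out in the preceding paragraphs, namely by reducing the integro-differential equation \eqref{eq:yODEBoussinesq} to the fixed-point problem \eqref{eq:yODEBoussinesqMild} and invoking the Banach Contraction Mapping Theorem. Concretely, I would first perform the change of variables $x:=1+\log z$ (valid because $z_0>1$ and, as long as the solution persists, $z$ stays above $1$ — which should be checked, but follows since the right-hand side of \eqref{eq:yODEBoussinesq} is nonnegative whenever $z\geq 1$, so $z$ is nondecreasing), turning \eqref{eq:yODEBoussinesq} into \eqref{eq:yODEBoussinesqFinal}, and then set $w(t):=e^{-Ct}x(t)$ to arrive at the mild formulation \eqref{eq:yODEBoussinesqMild} with nonlinearity $F$ as displayed. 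The key observation, already emphasized in the text, is that the whole nonlinearity comes multiplied by $\ep$, so smallness of $\ep$ is exactly what buys the contraction.

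The two technical inputs are precisely \ref{i:CP1} and \ref{i:CP2}, both of which are proved in the excerpt: setting $\widetilde M=w_0$ and $M=2\widetilde M$, one checks that $\Psi_{w_0}$ maps the ball $B(0,M)\subset\cX=C([0,T))$ into itself provided $\ep\le\ep_1$ (condition \eqref{eq:McondBoussinesq}), and that $\Psi_{w_0}$ is Lipschitz with a constant controlled by $\ep TL$, hence a contraction, provided $\ep<\ep_2$ (condition \eqref{eq:ContractionCondBoussinesq}). Taking $\ep_0=\min\{1,\ep_1,\ep_2\}$ as in \eqref{eq:ep0}, the Banach fixed point theorem yields a unique $w\in B(0,M)\subset C([0,T))$ solving \eqref{eq:yODEBoussinesqMild}; since $F(\cdot,w)$ is continuous in $t$ for $w\in\cX$, the integral equation upgrades $w$ to $C^1([0,T))$. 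Undoing the substitutions, $x(t)=e^{Ct}w(t)$ and $z(t)=e^{x(t)-1}=\exp(e^{Ct}w(t)-1)$, gives the claimed $C^1([0,T))$ solution of \eqref{eq:yODEBoussinesq} with $z(0)=e^{x_0-1}=e^{\log z_0}=z_0$. Uniqueness at the level of $z$ follows from uniqueness of $w$ together with the fact that the substitutions are bijective on the relevant ranges ($z>1\leftrightarrow x>1\leftrightarrow w=e^{-Ct}x$). Finally, tracing the definitions of $\ep_1$ and $\ep_2$ shows they depend only on $T$, $M$, and the absolute constant $C$ — hence only on $T$ and $z_0$ — giving $\ep_0=\ep_0(T,z_0)$.

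The one point I would be most careful about is not the contraction estimates themselves (which are routine, if lengthy, and already carried out in \eqref{gest1}--\eqref{eq:LipcondBoussinesq}), but the bookkeeping that makes the reduction rigorous: namely that the solution $w$ produced in the fixed-point ball is genuinely a solution of the \emph{original} equation \eqref{eq:yODEBoussinesq} on the full interval $[0,T)$, with no hidden blow-up. Since we solve the mild equation \eqref{eq:yODEBoussinesqMild} directly on $[0,T)$ — rather than locally and then continuing — this is automatic: the fixed point is a bona fide global-on-$[0,T)$ solution, and the monotonicity of $z$ ensures $\log z$ stays well defined throughout. I would therefore present the argument by (i) recalling the substitutions and the mild form, (ii) citing the two already-proven claims \ref{i:CP1} and \ref{i:CP2}, (iii) applying Banach's theorem and the $C^1$ bootstrap, and (iv) reversing the change of variables and checking the initial condition and the dependence $\ep_0=\ep_0(T,z_0)$ — which is essentially the paragraph immediately preceding the Proposition statement, now assembled into a self-contained proof.
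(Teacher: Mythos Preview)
Your proposal is correct and follows exactly the paper's own argument: the reduction via $x=1+\log z$ and $w=e^{-Ct}x$ to the mild form \eqref{eq:yODEBoussinesqMild}, the invocation of \ref{i:CP1} and \ref{i:CP2} already established, the choice $\ep_0=\min\{1,\ep_1,\ep_2\}$, and the reversal of substitutions are precisely what the paper does in the paragraph preceding the Proposition. Your additional remarks on the $C^1$ bootstrap, on why $z$ stays above $1$, and on the bijectivity of the substitutions only make explicit what the paper leaves implicit.
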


We are now ready to address the main topic of this section.

\begin{proof}[Proof of   \rm{Theorem \ref{t:MainBoussinesq}}:]

Let $T>0$. Fix  $\bu_0 \in H^3_\sigma (\real^2)$ and $\phi_0 \in H^3(\real^2)$. Let
\beq \label{eq:D}
D:=\|\bu_0\|_{H^3}+\|\phi_0\|_{H^3}.
\eeq

We will make use of Proposition \ref{prop:BoussinesqODEexist} with $z_0 = D+1$, so that $w_0=1+\log(D+1)$. Choose $\ep_0$ as in \eqref{eq:ep0} with $M=2(1+\log(D+1))$. Let $0<\ep < \ep_0$.

Let $T_{\mathrm{max}}^\ep$ denote the maximal time of existence for the solution of the Boussinesq equations \eqref{eq:Boussinesq} with initial data $\bu_0$, $\theta_0 = 1+\ep\phi_0$. We wish to show that $T \leq T_{\mathrm{max}}^\ep$ for all $0 < \ep < \ep_0$. Let us assume, by contradiction, that $T > T_{\mathrm{max}}^\ep$ for some $0 < \ep < \ep_0$. Fix such an $\ep$.

In particular, $T_{\mathrm{max}}^\ep < \infty$, so that, since $y^\ep=y^\ep(t)=\|\bu^\ep(t)\|_{H^3} + \varepsilon\|\phi^\ep(t)\|_{H^3}$, as introduced in \eqref{eq:contnormBouss}, is a continuation norm, it follows that
\beq \label{eq:yblowsup}
\limsup_{t\to (T_{\mathrm{max}}^\ep)^-} y^\ep(t) = +\infty.
\eeq

Let $z^\ep=z^\ep(t) \in C^1([0,T))$ be the solution to \eqref{eq:yODEBoussinesq} with $z^\ep(0)=D+1$, where $D$ was introduced in \eqref{eq:D}. Note that the right-hand-side of  \eqref{eq:yODEBoussinesq} is positive as long as $z^\ep > 1$. Since, initially, $z^\ep (0) > 1$, we have $z^\ep(t) > 1$ for all $t \in [0,T)$ and, hence, $z^\ep$ is increasing.

We claim that $y^\ep(t) < z^\ep(t) $ for all $0 \leq t < T_{\mathrm{max}}^\ep$. To see this, set $Y^\ep :=y^\ep - z^\ep$ and observe that $Y^\ep(0)\leq D-(D+1)=-1<0$. Assume, by contradiction, that there exists $0<t_2^\ep< T_{\mathrm{max}}^\ep$ such that $Y^\ep(t_2^\ep)\geq 0$.  Let
\beq \label{eq:t1ep}
t_1^\ep := \inf\,\{t\in (0,t_2^\ep] \,|\, Y^\ep (t) \geq 0\}.
\eeq

Clearly $t_1^\ep>0$ and, by continuity, $Y^\ep(t_1^\ep)=0$, so that $y^\ep(t_1^\ep)=z^\ep(t_1^\ep)>1$. In addition, $Y^\ep(t) <0$ for all $0\leq t < t_1^\ep$. Next, set
\beq \label{eq:t0ep}
t_0^\ep := \inf \,\{a\in [0,t_1^\ep) \,|\, y^\ep (t) >1 \text{ for all } t \in (a,t_1^\ep]\}.
\eeq

For notational convenience  we will denote the right-hand side of \eqref{eq:yODEBoussinesq} by $\widetilde{F}_{\ep}(z,t)$. Observe  that $\widetilde{F}_{\ep}$ is monotone non-decreasing with respect to $z$, that is, if $y(\tau)\leq z(\tau)$ {\em for all $0\leq \tau  \leq t$} then $\widetilde{F}_{\ep}(y,t) \leq \widetilde{F}_{\ep}(z,t)$.

Note that, with this notation, on the interval $(t_0^\ep, t_1^\ep)$ we have
\beq \label{eq:yzInEqualBoussinesq}
    \dt{(y^\ep - z^\ep)} \leq  \widetilde{F}_\ep(y^\ep,t) - \widetilde{F}_\ep(z^\ep,t) ,
\eeq
since $\log^+y^\ep (t)=\log y^\ep (t)$ for $t$ in this time interval. In addition, observe that, because $y^\ep(\tau)< z^\ep(\tau)$ for all $0\leq \tau<t_1^\ep$, it follows that
\beq \label{eq:dYepdtNONPOS}
\dt{Y^\ep} (t) = \dt{(y^\ep - z^\ep)} (t) \leq 0
\eeq
for all $t \in (t_0^\ep,t_1^\ep)$. However, we must also have, by the Mean Value Theorem, that, for some $t_0^\ep < t_\ast^\ep < t_1^\ep$
\beq \label{eq:dYepdtPOS}
\dt{Y^\ep} (t_\ast^\ep) = \frac{Y^\ep(t_1^\ep) - Y^\ep(t_0^\ep)}{t_1^\ep - t_0^\ep}.
\eeq
But \eqref{eq:dYepdtPOS} is in contradiction with \eqref{eq:dYepdtNONPOS} since $Y^\ep(t_1^\ep)=0$ and $Y^\ep(t_0^\ep)<0$. This establishes our claim.

Next, having shown that $y^\ep(t) < z^\ep(t) $ for all $0 \leq t < T_{\mathrm{max}}^\ep$ we may deduce that
\[\limsup_{t\to (T_{\mathrm{max}}^\ep)^-} y^\ep(t) \leq \limsup_{t\to (T_{\mathrm{max}}^\ep)^-} z^\ep (t) < +\infty,\]
since the time of existence of $z^\ep$ is at least $T>T_{\mathrm{max}}^\ep$. This contradicts \eqref{eq:yblowsup} and hence we must have $T\leq T_{\mathrm{max}}^\ep$ for all $0<\ep < \ep_0$. This concludes the proof.

\end{proof}

\begin{remark}\label{r:EpsilonGrowth}
From \eqref{eq:McondBoussinesq} and \eqref{eq:ContractionCondBoussinesq}, it follows that \[
  \ep_0\sim e^{-e^{e^{T}}}. 
\]
The same proof of existence also shows that, if we fix $\ep$ instead of $T$, then we have time of existence $T$ with satisfies:
\[
    T\sim \log\log\log\frac{1}{\ep}.
\]
\end{remark}


\section{The 2D density-dependent Euler equations} \label{s:nhEuler}

\bigskip

We now turn to the proof of Theorem \ref{t:MainNhEuler}. Since the argument is similar to that of the proof of Theorem \ref{t:MainBoussinesq}, we give details only when the proof deviates significantly.
Again, the proof is based on energy estimates and a continuation criterion for strong solutions. In our analysis we will rely heavily on {\it a priori} estimates derived in \cite{BLS20}, but with slight modifications.

Fix $\bv_0,$ $\varphi_0 \in H^3(\real^2)$, with $\varphi_0 \geq 0$. We are interested in solving \eqref{eq:nhEuler} with initial data $\bv(0,\cdot)=\bv_0$ and $\rho(0,\cdot)=1+\ep \varphi_0$. We will need an additional hypothesis, namely, $\eta_0 := \displaystyle{\frac{1}{\rho(0,\cdot)} - 1 \in H^3(\real^2)}$, which is certainly satisfied if
\begin{equation} \label{eq:firstep0cond}
    0\leq \ep \leq \min \left\{ 1, \frac{1}{2\max \varphi_0} \right\}.
\end{equation} Assuming these conditions we  use \cite[Theorem 1.1]{BLS20} to find a unique strong solution $\bv^\ep$, $\rho^\ep - 1 \in C^0([0,T^\ast);H^3(\real^2))$. It is standard to show that, additionally, $\bv^\ep$, $\rho^\ep - 1 \in  C^1([0,T^\ast);H^{2}(\real^2))$. Moreover,
\[
\eta^\ep = \displaystyle{\frac{1}{\rho^\ep} - 1} \in C^0([0,T^\ast);H^3(\real^2)) \cap C^1([0,T^\ast);H^{2}(\real^2)).
\]
It was established in \cite[Theorem 1.1]{BLS20} that as long as
\beq \label{eq:Mcontnorm}
\mathcal{M^\ep} (t)\equiv \|\varphi^\ep(t,\cdot)\|_{H^3}+\|\eta^\ep(t,\cdot)\|_{H^3}+\|\bv^\ep(t,\cdot)\|_{H^3}
\eeq
remains bounded, the solution of \eqref{eq:nhEuler} can be further extended in time.

As we did for the Boussinesq equations, we write:
\beq \label{eq:Density}
   \rho^\ep(t,x)= 1 + \ep \,\varphi^\ep (t,x),
\eeq
with $  \varphi^\ep(0,x)=\varphi_0(x)$. We note that $\varphi^\ep$ is also transported by $\bv^\ep$.

Let us also recall the basic energy estimate for \eqref{eq:nhEuler}, that is, the conservation of kinetic energy:
\beq \label{eq:basicenestnhEuler}
\|\sqrt{\rho^\ep(t,\cdot)}\,\bv^\ep(t,\cdot)\|_{L^2}=\|\sqrt{1+\ep\varphi_0}\,\bv_0\|_{L^2}.
\eeq

Our analysis is based on a useful differential inequality, which we will proceed to derive. Similarly to what we did in Section \ref{s:Boussinesq}, we will omit the superscript $\ep$ in $\bv^\ep$, $\rho^\ep$, $\eta^\ep$, and $\varphi^\ep$, for convenience. Differently from the case of Boussinesq, where the pressure does not directly play a role in the continuation argument, for the density-dependent Euler equations it turns out that $\nabla p/\rho$ enters into the vorticity equation.

We will derive a differential inequality for $\|\bv\|_{H^3}$. Let us begin by recalling the $2D$ vorticity equation:
\beq \label{eq:vorteqetaform}
\partial_{t}\upomega +\bv\cdot \nabla \upomega=-\nabla^{\perp}\eta\cdot \nabla p,
\eeq
where $\upomega = \nabla^\perp \cdot \bv$ is the vorticity and $\eta = \displaystyle{\frac{1}{\rho}-1}$.

From this, we easily find, by energy methods,
\[
\begin{split}
\frac{d}{dt} \left\|\upomega \right\|_{L^{2}}&\leq C\left\|\nabla^{\perp}\eta\cdot \nabla p\right\|_{L^{2}}, \qquad \text{and}\\
 \frac{d}{dt} \left\|\nabla \upomega \right\|_{L^{2}}&\leq C\left(\left\|\nabla \bv\right\|_{L^{\infty}}   \left\|\nabla \upomega\right\|_{L^{2}}+ \left\|\nabla (\nabla^{\perp}\eta\cdot \nabla p)\right\|_{L^{2}}\right).
\end{split}
\]

Recall that $\nabla \bv$ is related to $\upomega$ by a zero-th order singular integral operator. Therefore, from the  Calder\'on-Zygmund inequality \cite{CalderonZygmund1952}, we have for any $1<p<\infty$,
\beq \label{eq:CZineqvomega}
\| D^2 \bv \|_{L^p} \leq C \|\nabla \upomega\|_{L^p},
\eeq
where $D^2$ refers to any second (spatial) derivative of $\bv$.

Using energy estimates and \eqref{eq:CZineqvomega} with $p=4$ we obtain
\[
\frac{d}{dt} \left\|\Delta \upomega \right\|_{L^{2}}\leq C\left\|\nabla \bv\right\|_{L^{\infty}}   \left\|\Delta\upomega\right\|_{L^{2}}+ \left\|\Delta(\nabla^{\perp}\eta\cdot \nabla p)\right\|_{L^{2}}+C \left\|\nabla \upomega\right\|^{2}_{L^{4}}.
\]
We use the Gagliardo-Nirenberg interpolation inequality \cite{Gagliardo1959,Nirenberg1959} (see also \cite{BrezisMironescu2018} and references therein) to deduce that
\[
\left\|\nabla \upomega\right\|^{2}_{L^{4}} \leq C \left\|\upomega\right\|^{2}_{W^{1,4}} \leq C \left\|\upomega\right\|_{L^{\infty}}\left\|\upomega\right\|_{H^{2}} \leq C\left\|\nabla \bv\right\|_{L^{\infty}}\left\|\upomega\right\|_{H^{2}}.
\]
Putting together this collection of estimates we obtain:
\[
\begin{split}
\frac{d}{dt} \left\|\upomega \right\|_{H^{2}}&\leq C\left\|\nabla \bv\right\|_{L^{\infty}} \left\|\upomega\right\|_{H^{2}}+ \left\|\nabla^{\perp}\eta\cdot \nabla p\right\|_{H^{2}}\\
& \leq C\left\|\nabla \bv\right\|_{L^{\infty}} \left\|\upomega\right\|_{H^{2}}+ C\left\|\nabla\eta\right\|_{L^{\infty}}\left\|\nabla p\right\|_{H^{2}}+C\left\|\nabla p\right\|_{L^{\infty}}\left\|\nabla \eta\right\|_{H^{2}},
\end{split}
\]
i.e.,
\beq \label{eq:omegaH2est}
\frac{d}{dt} \left\|\upomega \right\|_{H^{2}} \leq C\left\|\nabla \bv\right\|_{L^{\infty}} \left\|\upomega\right\|_{H^{2}}+   C\left\|\nabla p\right\|_{H^{2}}\left\|\eta\right\|_{H^{3}}.
\eeq

Next, we write the equation satisfied by $\bv$ \eqref{eq:nhEuler}$_1$ as
\[\partial_t \bv + \bv \cdot \nabla \bv = -\frac{1}{\rho} \nabla p,\]
and we recall $\eta = \displaystyle{\frac{1}{\rho}-1}$.  Energy estimates yield
\[\frac{d}{dt} \| \bv \|_{L^2} \leq \|(\eta +1) \nabla p\|_{L^2} \leq \left(\|\eta\|_{L^\infty}+1 \right)
\|\nabla p\|_{L^2},\]
so that
\beq \label{eq:bvL2est}
\frac{d}{dt} \| \bv \|_{L^2} \leq C\left( \|\eta\|_{H^3} + 1\right) \|\nabla p\|_{H^2}.
\eeq

Putting together estimates \eqref{eq:omegaH2est} and \eqref{eq:bvL2est}, we find
\beq \label{eq:vH3Bound}
\frac{d}{dt} \left\|\bv \right\|_{H^{3}} \leq C  \left\|\nabla \bv\right\|_{L^{\infty}} \left\|\bv\right\|_{H^{3}}+ C\left\|\nabla p\right\|_{H^{2}}\left( \left\|\eta\right\|_{H^{3}} + 1 \right).
\eeq

Let us now estimate the pressure $p$. First recall that
\[
\left\|\nabla p\right\|_{H^{2}} \leq C \left( \left\|\nabla p\right\|_{L^{2}}+\left\|\nabla \Delta p\right\|_{L^{2}} \right).
\]

We note that $p$ satisfies the elliptic equation:
\[
    -\dive \left(\frac{\nabla p}{\rho} \right) = \dive(\bv \cdot \nabla \bv).
\]

Since $\varphi$ satisfies the linear transport equation \eqref{eq:nhEuler}$_2$ with the divergence-free Lipschitz velocity field $\bv$, it follows that $\max \varphi = \max \varphi_0$. Consequently, since we assumed that $\ep$ satisfies  \eqref{eq:firstep0cond}, we have, for  $1/\rho$:
\beq \label{eq:etaPointwiseBound}
   \frac{2}{3} \leq  \frac{1}{\rho}\equiv  \frac{1}{1+\ep\,\varphi(t,x)} \leq 1.
\eeq

Energy methods together with the estimate \eqref{eq:etaPointwiseBound} on $1/\rho$ give
\beq \label{eq:NablapL2Bound}
  \norm{\nabla p}_{L^2} \leq \frac{3}{2}\, \norm{\bv}_{L^2} \norm{\nabla \bv}_{L^\infty}.
\eeq
Now, using again \eqref{eq:etaPointwiseBound} together with the conservation of energy \eqref{eq:basicenestnhEuler} yields
$\norm{\bv}_{L^2}$ bounded, so that
\beq \label{eq:NablapL2Boundprime}
\norm{\nabla p}_{L^2} \leq C \norm{\nabla \bv}_{L^\infty}.
\eeq


Next, using \cite[Eq. (2.13)]{BLS20} we have
\[-\dive \left(\frac{1}{\rho} \Delta \nabla p \right ) = \Delta \, \dive (\bv \cdot \nabla \bv) + \dive \, [\Delta \, (\eta \nabla p) - \eta \, \Delta \nabla p].
\]
Therefore, multiplying by $\Delta p$, integrating by parts and using \eqref{eq:etaPointwiseBound} we arrive at
\[
\| \nabla \Delta p \|_{L^{2}}  \leq  \|\nabla \, \dive (\bv \cdot \nabla \bv) \|_{L^2} +
\|\Delta \, (\eta \nabla p) - \eta \, \Delta \nabla p \|_{L^2} .
\]
It is easy to see that $\nabla \, \dive (\bv \cdot \nabla \bv)$ is a sum of terms of the form $D^2 \bv D \bv$. Thus, using additionally \eqref{eq:CZineqvomega} with $p=2$, we obtain
\[\left\| \nabla \Delta p \right\|_{L^{2}} \leq C \left\|\nabla \bv\right\|_{L^{\infty}} \left\| \nabla \upomega\right\|_{L^{2}}+ C\left\|\nabla\eta\right\|_{L^{\infty}}\left\|\Delta p\right\|_{L^{2}}+C\left\|\nabla p\right\|_{L^{\infty}}\left\|\Delta \eta\right\|_{L^{2}}.
\]

Thus, we obtain
\[
\begin{split}
\left\|\nabla p\right\|_{H^{2}} &\leq C\left\|\nabla \bv\right\|_{L^{\infty}} + C \left\|\nabla \bv\right\|_{L^{\infty}} \left\| \nabla \upomega\right\|_{L^{2}}+ C\left\|\nabla\eta\right\|_{L^{\infty}}\left\|\Delta p\right\|_{L^{2}}+C\left\|\nabla p\right\|_{L^{\infty}}\left\|\Delta \eta\right\|_{L^{2}}\\
&\leq C\left\|\nabla \bv\right\|_{L^{\infty}} + C \left\|\nabla \bv\right\|_{L^{\infty}} \left\| \nabla \upomega\right\|_{L^{2}} +C\left(\left\|\Delta p\right\|_{L^{2}} + \left\|\nabla p\right\|_{L^{\infty}}\right)\left\|\eta\right\|_{H^{3}}.
\end{split}
\]
From the Gagliardo-Nirenberg inequalities we see that
\[
\left\|\Delta p\right\|_{L^{2}} +\left\|\nabla p\right\|_{L^{\infty}}
\leq
C \left\|\nabla p\right\|^{\frac{1}{2}}_{L^{2}}\left\|\nabla \Delta p\right\|^{\frac{1}{2}}_{L^{2}}.
\]
Therefore, using \eqref{eq:NablapL2Boundprime} followed by Young's inequality we have
\[
\begin{split}
\left\|\Delta p\right\|_{L^{2}} +\left\|\nabla p\right\|_{L^{\infty}}
& \leq C \left\|\nabla \bv\right\|^{\frac{1}{2}}_{L^{\infty}}\left\|\nabla \Delta p\right\|^{\frac{1}{2}}_{L^{2}}\\
&\leq C\lambda \left\|\nabla p\right\|_{H^{2}} +\frac{1}{4\lambda}  \left\|\nabla \bv\right\|_{L^{\infty}},
\end{split}
\]
for any $\lambda > 0$.

Hence  we derived the estimate
\[
\left\|\nabla p\right\|_{H^{2}} \leq C\left\|\nabla \bv\right\|_{L^{\infty}} + C \left\|\nabla \bv\right\|_{L^{\infty}} \left\| \nabla \upomega\right\|_{L^{2}}
+C \left(\lambda \left\|\nabla p\right\|_{H^{2}} +\frac{1}{4\lambda}  \left\|\nabla \bv\right\|_{L^{\infty}}\right)\left\|\eta\right\|_{H^{3}}.
\]
By choosing $\lambda$ such as
\[
C\lambda\left\|\eta\right\|_{H^{3}}=\frac{1}{2},
\]
we obtain
\beq \label{eq:pH2Bound}
\left\|\nabla p\right\|_{H^{2}} \leq C\left( \left\|\nabla \bv\right\|_{L^{\infty}} + \left\|\nabla \bv\right\|_{L^{\infty}} \left\| \nabla \upomega\right\|_{L^{2}} + \left\|\nabla \bv\right\|_{L^{\infty}}\left\|\eta\right\|_{H^{3}}^2 \right).
\eeq
Using \eqref{eq:pH2Bound} in \eqref{eq:vH3Bound} we find

\begin{align} \label{eq:dvH3dt1}
\frac{d}{dt} \|\bv \|_{H^{3}} &  \leq C\left\|\nabla \bv\right\|_{L^{\infty}} \left\|\bv\right\|_{H^{3}} + \\
& + C \left( \left\|\nabla \bv\right\|_{L^{\infty}} + \left\|\nabla \bv\right\|_{L^{\infty}} \left\| \nabla \upomega\right\|_{L^{2}} + \left\|\nabla \bv\right\|_{L^{\infty}}\left\|\eta\right\|_{H^{3}}^2 \right)
\left( \left\|\eta\right\|_{H^{3}} + 1 \right) \nonumber \\
& \leq C\left\|\nabla \bv\right\|_{L^{\infty}} \left\|\bv\right\|_{H^{3}}
+ C \left\|\nabla \bv\right\|_{L^{\infty}} +
C\left\|\nabla \bv\right\|_{L^{\infty}}\left\|\eta\right\|_{H^{3}}^2 \nonumber \\
& +
C\left\|\nabla \bv\right\|_{L^{\infty}}\left\|\eta\right\|_{H^{3}} + C \left\|\nabla \bv\right\|_{L^{\infty}} \left\| \nabla \upomega\right\|_{L^{2}}\left\|\eta\right\|_{H^{3}} +  C \left\|\nabla \bv\right\|_{L^{\infty}}\|\eta \|_{H^{3}}^3 \nonumber \\
& \leq C \left\|\nabla \bv\right\|_{L^{\infty}} \left[ 1 + \| \bv \|_{H^3} + \| \eta \|_{H^3}
 + \| \eta \|_{H^3}^2  + \| \eta \|_{H^3}^3 \right] + C \left\|\nabla \bv\right\|_{L^{\infty}} \|\nabla \upomega\|_{L^2} \| \eta \|_{H^3}. \nonumber
\end{align}

Using the Gagliardo-Nirenberg inequalities to estimate $\|\nabla \upomega\|_{L^2}$ we estimate the last term in
\eqref{eq:dvH3dt1}:

\begin{align*}
\left\|\nabla \bv\right\|_{L^{\infty}} \left\| \nabla \upomega\right\|_{L^{2}}\left\|\eta\right\|_{H^{3}}&\leq C\left\|\nabla \bv\right\|^{\frac{1}{3}}_{L^{\infty}} \|\bv \|^{\frac{1}{3}}_{L^{2}}\left\|\eta\right\|_{H^{3}} \left\|\nabla \bv\right\|^{\frac{2}{3}}_{L^{\infty}}\left\| \nabla \Delta \bv\right\|^{\frac{2}{3}}_{L^{2}}\\
&\leq C \left\|\nabla \bv\right\|_{L^{\infty}} \left\|\eta\right\|^{3}_{H^{3}}+C \left\|\nabla \bv\right\|_{L^{\infty}}\left\| \nabla \Delta \bv\right\|_{L^{2}}\\
& \leq C \left\|\nabla \bv\right\|_{L^{\infty}} \left\|\eta\right\|^{3}_{H^{3}}+C \left\|\nabla \bv\right\|_{L^{\infty}}
\left\|\bv\right\|_{H^{3}},
\end{align*}
where we used Young's inequality and the conservation of energy \eqref{eq:basicenestnhEuler} in the penultimate inequality.

With this \eqref{eq:dvH3dt1} becomes
\beq \label{eq:dvH3dt2}
\frac{d}{dt} \left\|\bv \right\|_{H^{3}}  \leq C \left\|\nabla \bv\right\|_{L^{\infty}} \left[ 1 + \| \bv \|_{H^3} + \| \eta \|_{H^3} + \| \eta \|_{H^3}^2  + \| \eta \|_{H^3}^3 \right].
\eeq

We claim that:
\beq  \label{eq:etaH3bound}
  \norm{\eta(t)}_{H^3} \leq C\,\ep \, \norm{\varphi(t)}^3_{H^3}.
\eeq
Indeed, a lengthy yet elementary calculation, together with the 2D Sobolev embedding theorem, yields
\beq \label{eq:D3etaest}
\|D^3\eta\|_{L^2} \leq C\ep(\|\varphi\|_{H^3}^3 +  \|\varphi\|_{H^3}).\eeq
In addition, it is immediate that
\beq \label{eq:etaL2est}
\|\eta\|_{L^2} \leq 2\ep \|\varphi\|_{L^2}.
\eeq
However, $\|\varphi\|_{H^3} \geq \|\varphi\|_{L^2}=\|\varphi_0\|_{L^2}$, and hence it follows that
\beq \label{eq:lin2cubeest}
\|\varphi\|_{H^3} \leq C \|\varphi\|_{H^3}^3,
\eeq
where $C=\|\varphi_0\|_{L^2}^{-2}$. Putting together \eqref{eq:D3etaest}, \eqref{eq:etaL2est} and \eqref{eq:lin2cubeest} easily yields \eqref{eq:etaH3bound}.

We estimate the powers of $\|\eta\|_{H^3}$ in \eqref{eq:dvH3dt2} using first \eqref{eq:etaH3bound}, then iteratively \eqref{eq:lin2cubeest}, and, finally, recalling $\varepsilon \leq 1$ by assumption \eqref{eq:firstep0cond}, we deduce that
\beq \label{eq:vH3Boundcombined}
\frac{d}{dt} \left\|\bv \right\|_{H^{3}} \leq C \left\|\nabla \bv\right\|_{L^{\infty}} \left[ 1 + \| \bv \|_{H^3} + \varepsilon\| \varphi \|_{H^3}^9\right].
\eeq

Let us now set
\beq \label{eq:YZnotation}
    Y(t):=  \norm{\bv(t,\cdot)}_{H^3}, \quad Z(t):= \norm{\varphi(t,\cdot)}_{H^3}.
\eeq

We make use of  estimate \cite[(2.7)]{BLS20} on $\norm{\varphi}_{H^3}$ to find:
\beq
   \frac{d}{dt} \norm{\varphi}_{H^3}
    \leq C\,\left( \norm{\bv}_{H^3} \norm{\nabla \varphi}_{L^\infty} +\|\nabla \bv\|_{L^\infty} \,\norm{\varphi}_{H^3} \right).  \label{eq:nhEulerEnergyEst.2}
\eeq
From the Sobolev inequality it follows that
\[ \frac{d}{dt} \norm{\varphi}_{H^3}  \leq C\,  \norm{\bv}_{H^3} \norm{\varphi}_{H^3},
\]
so that, using  Gr\"onwall's Lemma, we obtain
\begin{align} \label{eq:varphiH3est}
Z(t)& \leq \left\|\varphi_{0}\right\|_{H^{3}}\exp\left(C\int^{t}_{0}Y(\tau)d\tau\right)\\
& \leq C \exp\left(C\int^{t}_{0}Y(\tau)d\tau\right) . \nonumber
\end{align}

Additionally, we easily find that
\beq \label{eq:varphiH3estpower9}
[Z(t)]^9 \leq C \exp\left(C\int^{t}_{0}Y(\tau)d\tau\right) ,
\eeq
adjusting the constants as needed.

Thus, using the notation introduced in \eqref{eq:YZnotation} together with estimate \eqref{eq:varphiH3estpower9}, we see that \eqref{eq:vH3Boundcombined} becomes
\beq \label{eq:dvH3dt3}
\frac{dY}{dt}  \leq C \left\|\nabla \bv\right\|_{L^{\infty}} \left[ 1 + Y + \varepsilon \exp\left(C\int^{t}_{0}Y(\tau)d\tau\right)\right].
\eeq

We will now use an improvement of the potential theory estimate \eqref{eq:BoussinesqEnergyEst.2}. This version is due to H. Kozono and Y. Taniuchi, see \cite[Theorem 1]{KT2000}, using additionally that $\| D\bv \|_{BMO} \leq C \|\upomega \|_{L^\infty}$ and $\log^+(x) \leq \log (1+x)$, namely
\begin{equation} \label{eq:K-TverBoussinesqEnergyEst.2}
\norm{D \bv}_{L^\infty} \leq C\, \left[1+\left(\log(1+ \norm{\bv}_{H^{3}}) \right) \norm{\upomega}_{L^\infty}\right]. \end{equation}

We write \eqref{eq:K-TverBoussinesqEnergyEst.2} using the notation $Y= \| \bv \|_{H^3}$ as:
\[\| \nabla \bv \|_{L^\infty} \leq C\left[ 1+\left( \log \left(1+Y\right)\right) \| \upomega \|_{L^{\infty}}\right] .\]

Inserting this estimate into \eqref{eq:dvH3dt3} gives
\beq \label{eq:vH3normEnergyEstNEWN}
\frac{dY}{dt}  \leq C \left[ 1+\left( \log \left(1+Y\right)\right) \| \upomega \|_{L^{\infty}}\right]\left[ 1 + Y + \varepsilon \exp\left(C\int^{t}_{0}Y(\tau)d\tau\right)\right].
\eeq

Next we recall the vorticity equation \eqref{eq:vorteqetaform}, from which we deduce that
\[\left\|\upomega \right\|_{L^{\infty}} \leq \left\|\upomega_{0} \right\|_{L^{\infty}}+\int^{t}_{0}\left\|\nabla \eta\right\|_{L^{\infty}} \left\|\nabla p\right\|_{L^{\infty}}ds.\]

Using \eqref{eq:pH2Bound} and \eqref{eq:etaH3bound} and expressing $\|\bv \|_{H^3}$ and $\| \varphi \|_{H^3}$ in terms of $Y$ and $Z$, respectively, we obtain:
\[\left\|\upomega \right\|_{L^{\infty}} \leq C + C\ep \int^{t}_{0}Z^3 Y(1+Y+\ep^2 Z^{6}).\]

Hence \eqref{eq:vH3normEnergyEstNEWN} becomes
\beq \label{eq:Ydiffineq1stver}
\begin{split}
\frac{dY}{dt} \leq &C  \left\{1+\bigg[\log (1+Y)\bigg] \left[ 1 + \ep\int_0^t Z^3 Y(1+Y+\ep^2 Z^{6}) \,ds\right]\right\}\\
&\times \left\{1+Y+\ep \exp\left[C\int^{t}_{0}Y(\tau)d\tau\right]\right\}.
\end{split}
\eeq


We  write \eqref{eq:Ydiffineq1stver} in compact form as
\beq \label{eq:Ydiffineq2ndver}
\frac{dY}{dt} \leq C  \left\{(1+Y)\big[1+\log(1+Y)\big] + \ep F(t,Y,Z)\right\},
\eeq
where $F$ is a nonlinearity involving $(Y(t), Z(t))$ and their histories up until time $t$. More precisely,
\[
\begin{split}
F(t, Y,Z)&= \bigg[1+\log(1+Y)\bigg]\left\{\exp \left[C\int^{t}_{0}Y(\tau)d\tau\right]\right\} \\
& + (1+Y)\bigg[\log(1+Y)\bigg]
\left\{ \int_0^t Z^3 Y (1+Y+\ep^2 Z^6) \right\} \\
& + \ep \bigg[\log(1+Y)\bigg] \left\{ \int_0^t Z^3 Y (1+Y+\ep^2 Z^6) \right\} \left\{\exp \left[C\int^{t}_{0}Y(\tau)d\tau\right]\right\}.
\end{split}
\]

Now, since $Y\geq 0$, $Z \geq 0$ it is easily verified that $F(t,Y,Z) \geq 0$. Therefore $\ep F(t,Y,Z) \leq \ep (1+Y)F(t,Y,Z).$ Hence, \eqref{eq:Ydiffineq2ndver} implies

\beq \label{eq:Ydiffineq2ndverprime}
\frac{dY}{dt} \leq C  (1+Y)\left\{\big[1+\log(1+Y)\big] + \ep F(t,Y,Z)\right\}.
\eeq

Let $\Upsilon:= 1+Y$. Then
\beq \label{eq:Ydiffineq3rdver}
\frac{d\Upsilon}{dt} \leq C \Upsilon [ (1+\log\Upsilon  + \ep F(t,\Upsilon-1,Z)].
\eeq
From \eqref{eq:varphiH3est} it follows that
\beq
Z(s) \leq C \exp\left \{C \int_0^s (\Upsilon (\tau)-1) \, d\tau\right\}, \qquad \text{ for all } 0\leq s \leq t.
\eeq
We note that $F$ is {\it monotone non-decreasing} with respect to $Y$ and $Z$ and their histories, that is, if $Y_1(s) \leq Y_2(s)$ and $Z_1(s) \leq Z_2(s)$ for all $0\leq s \leq t$, then $F(t,Y_1,Z_1) \leq F(t,Y_2,Z_2)$. In view of this observation we deduce that $\Upsilon$ satisfies the following inequality
\beq \label{eq:yIneqNhEulerFinal}
\frac{d\Upsilon}{dt} \leq C \Upsilon [ (1+\log\Upsilon  + \ep \mathcal{H}(t,\Upsilon)],
\eeq
with $\mathcal{H}$ a nonlinear function of $\Upsilon$, $t$ and time-integrals of nonlinear functions of $\Upsilon$ and its time-integral:
\beq \label{eq:FormulaHtUpsilon}
\mathcal{H}(t,\Upsilon) = F\left(t,\Upsilon-1, \exp\left \{C \int_0^t (\Upsilon (s)-1) \, ds\right\} \right).
\eeq
We note that $\mathcal{H}(t,\Upsilon) \geq 0$ whenever $\Upsilon \geq 1$.

As we did for the Boussinesq equations, we first study the equation
\beq \label{eq:mathcalZODEnHEuler}
\frac{d\mathcal{Z}}{dt} = C \mathcal{Z} [ (1+\log \mathcal{Z}  + \ep \mathcal{H}(t,\mathcal{Z})],
\eeq
$\mathcal{Z}(0)=\mathcal{Z}_0>1$. We point out that, in view of the sign properties of $\mathcal{H}$, inherited from $F$, it follows that $\mathcal{Z}(t) \geq 1$ for all $t$ in the interval of existence for \eqref{eq:mathcalZODEnHEuler}.

We make the same change of variables $X= 1+\log \mathcal{Z}$ followed by $W= e^{-Ct}X$ to obtain
\beq \label{eq:WODEnHEuler}
\frac{dW}{dt} = C \ep  e^{-Ct} \mathcal{H}\left(t,\exp\{e^{Ct}W\}\right) \equiv \ep \mathcal{F}(t,W).
\eeq
We continue following the strategy for the Boussinesq equations by writing \eqref{eq:WODEnHEuler} in mild form and then using the Banach Contraction Mapping Theorem to establish existence of a solution $W \in C^1([0,T];[0,+\infty))$ to \eqref{eq:WODEnHEuler} for sufficiently small $\ep$. We note, from the expression for $F=F(t,Y,Z)$, combined with the changes of variables performed, that $\mathcal{F}$ inherits properties \eqref{i:CP1} and \eqref{i:CP2} from similar properties satisfied by $F$. This allows us to establish an analogue of Proposition \ref{prop:BoussinesqODEexist} for \eqref{eq:mathcalZODEnHEuler}.

It was noted in \eqref{eq:Mcontnorm} that, while
\[\mathcal{M}(t)\equiv \|\varphi(t,\cdot)\|_{H^3}+\|\eta(t,\cdot)\|_{H^3}+\|\bv(t,\cdot)\|_{H^3}\]
remains bounded, the time of existence may be prolonged. However, we showed in \eqref{eq:etaH3bound} that $\|\eta(t,\cdot)\|_{H^3}$ is controlled by $\|\varphi(t,\cdot)\|_{H^3}$ and, in \eqref{eq:varphiH3est}, that
$\|\varphi(t,\cdot)\|_{H^3}$ is controlled by $\|\bv(t,\cdot)\|_{H^3}$. Therefore we may use
$\|\bv(t,\cdot)\|_{H^3}$ as a continuation norm, in place of $\mathcal{M}(t)$.

Putting back the dependence on $\ep$ in $\bv^\ep$, $\varphi^\ep$ and $\eta^\ep$, we now argue that the proof of Theorem \ref{t:MainNhEuler} proceeds similarly to the proof of Theorem \ref{t:MainBoussinesq} using $\Upsilon^\ep \equiv \|\bv^\ep\|_{H^3} +1$ in place of $y^\ep$. Indeed, the argument relies on the ODE for $\mathcal{Z}$, \eqref{eq:mathcalZODEnHEuler}, preserving the condition $\mathcal{Z}\geq 1$, as has already been observed, and on $\mathcal{H}$ being monotone non-decreasing with respect to $\Upsilon$ and its history, something which can be easily verified.

Again for fixed initial data, we have that
\[
   \ep \sim e^{-e^{e^{CT}}},
\]
since $\mathcal{F}$ depends exponentially on $W$ through $\mathcal{H}$ and $\mathcal{H}$ depends exponentially
on $\mathcal{Z}$, which depends exponentially on $e^{Ct} \, W$.

\vspace{1cm}

\begin{remark}
 We observe that both inequalities \eqref{eq:yIneqBoussinesq} and \eqref{eq:yIneqNhEulerFinal}
 are of the general form:
\beq \label{eq:yIneqAbstract}
  \dt y \leq G(y)[1 + \ep \Bar{F}(t,y)],
\eeq
where $G$ satisfies Osgood's condition for global existence and uniqueness of solutions of the  autonomous ODE \, $\dt{y}=G(y)$, $\Bar{F}$ is non linear and history dependent, but locally Lipschitz in $y$, continuous in $t$, and monotone non-decreasing.
Under these conditions, it is possible to prove a general long-time existence result for solutions to \eqref{eq:yIneqAbstract} for $\ep$ sufficiently small. However, we specialize our proofs to \eqref{eq:yIneqBoussinesq} and \eqref{eq:yIneqNhEulerFinal} in order to estimate the dependence of $\ep$ on $T$ and the size of the initial condition or, equivalently, the dependence of $T$ on $\ep$ and the size of the $y_0$ (cf. Remark \ref{r:EpsilonGrowth}).
\end{remark}

\bibliography{NonhomEulerBouss-2025_04_29-Hantaek-L2-mixVersion}

\begin{thebibliography}{10}

\bibitem{BLS20}
H.~Bae, W.~Lee, and J.~Shin.
\newblock A blow-up criterion for the inhomogeneous incompressible {E}uler
  equations.
\newblock {\em Nonlinear Anal.}, 196:111774, 9, 2020.

\bibitem{BKM1984}
J.~T. Beale, T.~Kato, and A.~Majda.
\newblock Remarks on the breakdown of smooth solutions for the 3-{D} {Euler}
  equations.
\newblock {\em Commun. Math. Phys.}, 94:61--66, 1984.

\bibitem{BV80.1}
H.~Beir\~{a}o~da Veiga and A.~Valli.
\newblock On the {E}uler equations for nonhomogeneous fluids. {I}.
\newblock {\em Rend. Sem. Mat. Univ. Padova}, 63:151--168, 1980.

\bibitem{BV80.2}
H.~Beir\~{a}o~da Veiga and A.~Valli.
\newblock On the {E}uler equations for nonhomogeneous fluids. {II}.
\newblock {\em J. Math. Anal. Appl.}, 73(2):338--350, 1980.

\bibitem{BrezisMironescu2018}
H.~Brezis and P.~Mironescu.
\newblock Gagliardo-{N}irenberg inequalities and non-inequalities: the full
  story.
\newblock {\em Ann. Inst. H. Poincar\'{e} C Anal. Non Lin\'{e}aire},
  35(5):1355--1376, 2018.

\bibitem{CalderonZygmund1952}
A.~Calderon and A.~Zygmund.
\newblock On the existence of certain singular integrals.
\newblock {\em Acta Mathematica}, 88(1):85 – 139, 1952.
\newblock Cited by: 683.

\bibitem{CL03}
D.~Chae and J.~Lee.
\newblock Local existence and blow-up criterion of the inhomogeneous {E}uler
  equations.
\newblock {\em J. Math. Fluid Mech.}, 5(2):144--165, 2003.

\bibitem{CN97}
D.~Chae and H.-S. Nam.
\newblock Local existence and blow-up criterion for the {B}oussinesq equations.
\newblock {\em Proc. Roy. Soc. Edinburgh Sect. A}, 127(5):935--946, 1997.

\bibitem{D10}
R.~Danchin.
\newblock On the well-posedness of the incompressible density-dependent {E}uler
  equations in the {$L^p$} framework.
\newblock {\em J. Differential Equations}, 248(8):2130--2170, 2010.

\bibitem{D13}
R.~Danchin.
\newblock Remarks on the lifespan of the solutions to some models of
  incompressible fluid mechanics.
\newblock {\em Proceedings of the American Mathematical Society},
  141(6):1979--1993, 2013.

\bibitem{DF11}
R.~Danchin and F.~Fanelli.
\newblock The well-posedness issue for the density-dependent {E}uler equations
  in endpoint {B}esov spaces.
\newblock {\em J. Math. Pures Appl. (9)}, 96(3):253--278, 2011.

\bibitem{FZ21}
J.~Fan and Y.~Zhou.
\newblock A blow-up criterion of the ideal density-dependent flows.
\newblock {\em J. Math. Anal. Appl.}, 497(1):Paper No. 124881, 5, 2021.

\bibitem{FM24}
L.~C.~F. Ferreira and D.~F. Machado.
\newblock On the well-posedness in {B}esov-{H}erz spaces for the inhomogeneous
  incompressible {E}uler equations.
\newblock {\em Dyn. Partial Differ. Equ.}, 21(1):1--29, 2024.

\bibitem{Gagliardo1959}
E.~Gagliardo.
\newblock Ulteriori propriet\`a di alcune classi di funzioni in pi\`u
  variabili.
\newblock {\em Ricerche Mat.}, 8:24--51, 1959.

\bibitem{I94}
S.~Itoh.
\newblock Cauchy problem for the {E}uler equations of a nonhomogeneous ideal
  incompressible fluid.
\newblock {\em J. Korean Math. Soc.}, 31(3):367--373, 1994.

\bibitem{Kato1986}
T.~Kato.
\newblock Remarks on the {E}uler and {N}avier-{S}tokes equations in {${\bf
  R}^2$}.
\newblock In {\em Nonlinear functional analysis and its applications, {P}art 2
  ({B}erkeley, {C}alif., 1983)}, volume 45, Part 2 of {\em Proc. Sympos. Pure
  Math.}, pages 1--7. Amer. Math. Soc., Providence, RI, 1986.

\bibitem{KT2000}
H.~Kozono and Y.~Taniuchi.
\newblock Limiting case of the sobolev inequality in {BMO}, with application to
  the euler equations.
\newblock {\em Communications in Mathematical Physics}, 214(1):191--200, 2000.

\bibitem{MajdaBertozzi}
A.~J. Majda and A.~L. Bertozzi.
\newblock {\em Vorticity and incompressible flow}, volume~27 of {\em Cambridge
  Texts in Applied Mathematics}.
\newblock Cambridge University Press, Cambridge, 2002.

\bibitem{MP19}
U.~Manna and A.~A. Panda.
\newblock Higher order regularity and blow-up criterion for semi-dissipative
  and ideal {B}oussinesq equations.
\newblock {\em J. Math. Phys.}, 60(4):041503, 22, 2019.

\bibitem{Nirenberg1959}
L.~Nirenberg.
\newblock On elliptic partial differential equations.
\newblock {\em Ann. Scuola Norm. Sup. Pisa Cl. Sci. (3)}, 13:115--162, 1959.

\bibitem{W13}
Z.~Wei.
\newblock Local well-posedness for density-dependent incompressible {E}uler
  equations.
\newblock {\em Electron. J. Differential Equations}, pages No. 146, 18, 2013.

\bibitem{Z10}
Y.~Zhou.
\newblock Local well-posedness and regularity criterion for the density
  dependent incompressible {E}uler equations.
\newblock {\em Nonlinear Anal.}, 73(3):750--766, 2010.

\end{thebibliography}
\bibliographystyle{abbrv}

\end{document}